\documentclass[11pt]{amsart}

\usepackage{amsmath,amssymb,amsfonts,amsthm,mathrsfs,mathtools}
\usepackage{geometry}
\usepackage{hyperref}

\usepackage[activate={true,nocompatibility},final,
tracking=true,kerning=true,spacing=true,
factor=1100,stretch=10,shrink=10]{microtype}
            
\numberwithin{equation}{section}

\newtheorem{theorem}{Theorem}[section]
\newtheorem{lemma}[theorem]{Lemma}
\newtheorem{proposition}[theorem]{Proposition}

\newtheorem{conjecture}[theorem]{Conjecture}

\newtheorem{claim}{Claim}

\newcommand{\LM}{\mathscr{LM}}
\newcommand{\MM}{\mathscr{M}}
\newcommand{\calM}{\mathcal{M}}

\title{Two-place Laplacian matching root integral variations are impossible}

\author[S.M. Cioab\u{a}]{Sebastian M. Cioab\u{a}}
\address{Department of Mathematical Sciences, University of Delaware, Newark, DE 19716-2553, USA}
\email{cioaba@udel.edu}

\author[L. Liu]{LeLe Liu}
\address{School of Mathematical Sciences, Anhui University, Hefei 230601, Anhui, China}
\email{liu@ahu.edu.cn}

\author[Y. Wang]{Yi Wang}
\address{School of Mathematical Sciences, Anhui University, Hefei 230601, Anhui, China}
\email{wangy@ahu.edu.cn}

\thanks{{\it Corresponding author.} Yi Wang}
\thanks{{\it Funding.} Supported by the National Natural Science Foundation of China (No. 12331012, 12571360, 12471320)}
\date{\today}

\subjclass[2020]{05C31, 05C50}
\keywords{Laplacian matching polynomial, Laplacian matching roots, integral variation}

\begin{document}

\begin{abstract}
Wang, Cui, and Cioab\u{a} introduced the Laplacian matching root integral variation 
of a graph and proved that it cannot occur in one place. They also showed that the 
two-place variation is impossible for connected graphs satisfying $g(G)/c(G)>7/6$, where $g(G)$ is the girth and $c(G)$ is the
dimension of the cycle space, and conjectured that no connected graph admits such a two-place variation. In this 
paper, we confirm this conjecture. The proof combines a structural relation obtained 
in their paper with two new power-sum identities for Laplacian matching roots.
\end{abstract}

\maketitle

\section{Introduction}

There are many graph polynomials whose roots encode structural information of the underlying graph such as the characteristic polynomial, the chromatic polynomial, the matching polynomial, and the Tutte polynomial (see \cite{EMM}). 

In this paper, we continue the study of a root integral variation problem for the Laplacian matching polynomial.

Throughout the paper, graphs are finite, simple, and undirected. Let $G$ be a graph with
vertex set $V(G)=\{v_1,\dots,v_n\}$ and edge set $E(G)$. For a vertex $v\in V(G)$, 
we denote by $N_G(v)$ the set of neighbors of $v$, and by $d_G(v)$ its degree. 
If $e$ is a non-edge of $G$, then $G+e$ denotes the graph obtained from $G$ by adding $e$.

For a graph $G$ of order $n$, the matching polynomial of $G$ is defined by
\begin{equation}\label{eq:matching}
\MM_G(x)=\sum_{M\in \calM(G)}(-1)^{|M|}x^{|V(G)\setminus V(M)|}
=\sum_{j=0}^{\lfloor n/2\rfloor}(-1)^j\phi_j(G)x^{n-2j},
\end{equation}
where $\calM(G)$ is the set of matchings of $G$ and $\phi_j(G)$ is the number of 
$j$-matchings. The matching polynomial was introduced by Heilmann and Lieb \cite{HL}. 
We refer to Godsil \cite{Godsil1} for further background on matching polynomials.

Motivated by the relation between the adjacency characteristic polynomial and the 
matching polynomial established by Godsil and Gutman \cite{GodsilGutman}, 
Mohammadian \cite{Moham} introduced the \emph{Laplacian matching polynomial}
\begin{equation}\label{eq:LMdef}
\LM_G(x)=\sum_{M\in \calM(G)}(-1)^{|M|}\prod_{v\in V(G)\setminus V(M)}(x-d_G(v)).
\end{equation}
Independently, Zhang and Chen \cite{ZhangChen} studied the same polynomial under the 
name \emph{average Laplacian polynomial}. Mohammadian \cite{Moham} proved 
that all roots of $\LM_G(x)$ are real and nonnegative. We write them in non-increasing 
order as
\[
\lambda_1(G)\ge \lambda_2(G)\ge \cdots \ge \lambda_n(G)\ge 0.
\]
Wan, Wang, and Mohammadian \cite{WWM} proved that these roots satisfy an interlacing 
relation under edge addition: if $e$ is a non-edge of $G$, then
\begin{equation}\label{eq:interlacing}
\lambda_1(G+e)\ge \lambda_1(G)\ge \lambda_2(G+e)\ge \lambda_2(G)\ge \cdots \ge \lambda_n(G+e)\ge \lambda_n(G).
\end{equation}
Moreover, by Vieta's formulas applied to \eqref{eq:LMdef},
\begin{equation}\label{eq:sumroots}
\sum_{i=1}^n \lambda_i(G)=\sum_{v\in V(G)} d_G(v)=2|E(G)|.
\end{equation}
Hence, whenever a non-edge is added, the sum of Laplacian matching roots increases by exactly $2$.

Following Wang, Cui, and Cioab\u{a} \cite{WCC}, one says that \emph{Laplacian matching root 
integral variation} (LMRIV) occurs in one place if exactly one root increases by $2$, and in 
two places if exactly two roots increase by $1$ while all the remaining roots stay unchanged. 
In \cite{WCC}, it was proved that one-place LMRIV is impossible for connected graphs, 
and that two-place LMRIV is impossible whenever $g(G)/c(G)>7/6$, where $g(G)$ is the girth 
and $c(G)$ is the dimension of the cycle space. The paper concludes with the following conjecture.

\begin{conjecture}[{\cite[Conjecture~3.11]{WCC}}]\label{conj:WCC}
If $G$ is a connected graph and $e$ is a non-edge of $G$, then two-place LMRIV does not occur when $e$ is added.
\end{conjecture}

The purpose of the present paper is to settle this conjecture affirmatively.

\begin{theorem}\label{thm:main}
Let $G$ be a connected graph and let $e$ be a non-edge of $G$. Then two-place LMRIV cannot 
occur when $e$ is added. Equivalently, if $G'=G+e$, then it is impossible that exactly two 
Laplacian matching roots of $G$ increase by $1$ and all the others remain unchanged.
\end{theorem}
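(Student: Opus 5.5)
The plan is to argue by contradiction through power sums of Laplacian matching roots. Suppose $G'=G+e$ with $e=uv$, and suppose two-place LMRIV occurs. Then the multiset of roots of $G'$ is obtained from that of $G$ by replacing two roots $a,b$ (with $a\ge b$, indices adjacent or not) by $a+1,b+1$. In polynomial form this says
\[
\LM_{G'}(x)\,(x-a)(x-b)=\LM_G(x)\,(x-a-1)(x-b-1).
\]
For every $k\ge 1$, writing $p_k(H)=\sum_i\lambda_i(H)^k$, we then get
\[
p_k(G')-p_k(G)=(a+1)^k-a^k+(b+1)^k-b^k .
\]
The case $k=1$ is just \eqref{eq:sumroots}. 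I will compute $p_2$ and $p_3$ combinatorially and compare the differences on the two sides. This should pin down $a$ and $b$ in terms of local data at $u$ and $v$.

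\emph{Second power sum.} From \eqref{eq:LMdef}, the coefficient of $x^{n-2}$ in $\LM_H(x)$ is $\sum_{\{x,y\}}d_H(x)d_H(y)-|E(H)|$. Newton's identities then give
\[
p_2(H)=\sum_{w}d_H(w)^2+2|E(H)|.
\]
Adding $uv$ therefore changes $p_2$ by $2(d_u+d_v)+4$, where $d_u=d_G(u)$ and $d_v=d_G(v)$. The right-hand side changes by $2(a+b)+2$. Comparing the two yields the first identity
\[
a+b=d_u+d_v+1 .
\]

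\emph{Third power sum.} Next I compute the coefficient of $x^{n-3}$ in $\LM_H(x)$. It has the form $-\sum_{\text{triples}}d d d+\sum_{xy\in E}\sum_{w\ne x,y}d_H(w)$. Newton's identities turn this into a formula of the type
\[
p_3(H)=\sum_w d_H(w)^3+3\sum_{xy\in E(H)}\bigl(d_H(x)+d_H(y)\bigr)+c\,|E(H)|
\]
for an explicit constant $c$; I will check this carefully, since triangles may not appear at this order. Taking the difference under adding $uv$ gives $\Delta p_3$ as an explicit function of $d_u$, $d_v$ and $\sum_{w\in N(u)\cup N(v)}$-type terms. Comparing with $3(a^2+b^2)+3(a+b)+2$ determines $a^2+b^2$, hence $ab$. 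So $a$ and $b$ become the two roots of an explicit quadratic in local degree data.

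\emph{Contradiction, and the expected obstacle.} The final step plays these values against the structural relation of \cite{WCC}. That relation should say that $a$ and $b$ are roots of $\LM_G$ tied to the deleted subgraphs $G-u$, $G-v$, $G-u-v$ through the edge recursion
\[
\LM_{G'}(x)=\widetilde{\LM}_G(x)-\LM_{G-u-v}(x),
\]
where $\widetilde{\LM}_G$ is $\LM_G$ with the degrees of $u,v$ shifted by one. Evaluating the displayed polynomial identity at $x=a$, $x=b$, $x=a+1$, $x=b+1$, and combining with the interlacing \eqref{eq:interlacing}, should force inequalities such as $a\le \lambda_1(G)\le d_{\max}+\dots$ and $b\ge \dots$. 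I would aim to show these are incompatible with $a+b=d_u+d_v+1$ and the computed $ab$, for instance by showing the quadratic's discriminant forces $a>\lambda_1(G')$ or $b<0$. This last step is the main obstacle: making the power-sum data and the WCC relation collide for every connected $G$, not just those with large girth.
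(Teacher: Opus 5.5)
There is a genuine gap: the step that produces the contradiction is not supplied. In your notation ($a,b$ the changed roots, $d_u,d_v$ the degrees), your $p_2$ and $p_3$ computations are correct. They are even cleaner than you expect: $p_2(H)=A_2+2|E(H)|$ and $p_3(H)=A_3+3A_2$. So $c=0$, and no neighbour-degree terms appear at this order, because $\sum_{xy\in E}(d(x)+d(y))=A_2$ is pure degree data.

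Comparing increments gives exactly $a+b=d_u+d_v+1$ and $ab=d_ud_v$. This is precisely the relation the paper imports as Lemma~\ref{lem:WCC} from \cite[Theorem~3.2]{WCC}, and the paper uses nothing else from \cite{WCC}. So the "structural relation" you hoped to collide with is the one you have already derived, and this is where the paper's argument \emph{starts}. These two relations are consistent on their own. The quadratic $t^2-(d_u+d_v+1)t+d_ud_v$ equals $-d_u<0$ at $t=d_u$ and $-d_v<0$ at $t=d_v$, so its roots are real, distinct, positive, and satisfy $b<\min(d_u,d_v)\le\max(d_u,d_v)<a$. Hence a negative discriminant or $b<0$ cannot occur. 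Also $a>\lambda_1(G')$ is impossible, since $a+1$ is itself a root of $G'$. Your remaining step (evaluating the polynomial identity at $a,b,a+1,b+1$ and invoking interlacing) is not carried out. It is the kind of structural argument that in \cite{WCC} only succeeded under $g(G)/c(G)>7/6$.

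The missing idea is to push the power-sum comparison two orders further.
\begin{itemize}
\item Once $a+b$ and $ab$ are known, $\sum_{\lambda\in\{a,b\}}\bigl((\lambda+1)^r-\lambda^r\bigr)$ is an explicit polynomial in $d_u,d_v$ for every $r$.
\item On the combinatorial side, $p_4$ and $p_5$ (Proposition~\ref{prop:powersums}) contain $B=\sum_{xy\in E}d(x)d(y)$ and $C=\sum_{xy\in E}\bigl(d(x)^2d(y)+d(x)d(y)^2\bigr)$.
\item Their increments under adding $uv$ bring in the local sums $S_w=\sum_{x\in N(w)}d(x)$ and $T_w=\sum_{x\in N(w)}d(x)^2$ for $w\in\{u,v\}$.
\item Matching the increments of $p_4$ gives $S_u+S_v=2d_ud_v+d_u+d_v$.
\item Matching those of $p_5$ gives $(2d_u+3)S_u+(2d_v+3)S_v+T_u+T_v=R$, where $R=3d_u^2d_v+3d_ud_v^2+8d_ud_v+2d_u^2+2d_v^2+4d_u+4d_v$.
\item Cauchy--Schwarz, $T_u\ge S_u^2/d_u$ and $T_v\ge S_v^2/d_v$, then yields $R\ge R+2d_ud_v+\frac{d_u+d_v}{d_ud_v}\bigl(S_u-d_u(d_v+1)\bigr)^2>R$, a contradiction.
\end{itemize}
Your $p_2,p_3$ derivation of the sum and product, combined with this $p_4,p_5$ argument, would give a proof fully independent of \cite{WCC}.
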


Our proof is short. Besides one key structural lemma from \cite{WCC}, it uses two power-sum 
identities for Laplacian matching roots that seem to be of independent interest.

The motivation for studying  root integral variation comes in part from analogous questions for Laplacian eigenvalues, see So \cite{So} and Kirkland \cite{Kirkland} for example.

\section{Two power-sum identities}

This section aims to establish two power-sum identities, which will play a key role in the sequel. For a graph $G$ and $r\geq 1$, define
$p_r(G):=\sum_{i=1}^n \lambda_i(G)^r$. We also write
\[
A_r(G):=\sum_{v\in V(G)} d_G(v)^r,
\qquad
B(G):=\sum_{xy\in E(G)} d_G(x)d_G(y),
\]
\[
C(G):=\sum_{xy\in E(G)}\bigl(d_G(x)^2d_G(y)+d_G(x)d_G(y)^2\bigr).
\]
Henceforth, whenever there is no risk of confusion, we will suppress the argument $G$.

\begin{proposition}\label{prop:powersums}
Let $G$ be a graph. Then
\[
p_4(G)=A_4+4A_3+2A_2+4B-2|E(G)|,
\]
and
\[
p_5(G)=A_5+5A_4+5A_3-5A_2+5C+10B.
\]
\end{proposition}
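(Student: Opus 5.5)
The plan is to read off the power sums from the logarithm of $\LM_G(x)$ rather than from Newton's identities on the coefficients. The polynomial factors cleanly enough for this to work. Put $y_v:=1/(x-d_G(v))$. Then \eqref{eq:LMdef} gives
\[
\LM_G(x)=\prod_{v\in V(G)}(x-d_G(v))\cdot\bigl(1+F(x)\bigr),\qquad
F(x)=\sum_{\emptyset\neq M\in\calM(G)}(-1)^{|M|}\prod_{uv\in M}y_uy_v .
\]
On the other side, $\log\LM_G(x)=n\log x-\sum_{r\ge1}p_r/(r x^r)$ as a formal series in $x^{-1}$. Also $\sum_v\log(x-d_G(v))=n\log x-\sum_r A_r/(rx^r)$. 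Comparing the two expansions gives
\[
p_r=A_r-r\,[x^{-r}]\log(1+F).
\]
Everything therefore reduces to computing the coefficients of $x^{-4}$ and $x^{-5}$ in $\log(1+F)$.

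Each $y_v$ equals $x^{-1}\sum_{a\ge0}d_G(v)^a x^{-a}$, so a $j$-matching term in $F$ has order $x^{-2j}$. Up to order $x^{-5}$, only $1$- and $2$-matchings in $F$ matter, and in $\log(1+F)=F-F^2/2+F^3/3-\cdots$ only $F$ and $F^2$ matter, since $F^3=O(x^{-6})$. Write $S:=\sum_{uv\in E(G)}y_uy_v$. Then
\[
\log(1+F)=-S+\sum_{\{e,f\}\ \text{2-matching}}y_ey_f-\tfrac12 S^2+O(x^{-6}),
\]
where $y_e=y_uy_v$ for $e=uv$. Expanding $S^2$ over ordered pairs of edges, the disjoint pairs cancel exactly against the $2$-matching sum. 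What remains is
\[
-\tfrac12\Bigl(\sum_{e}y_e^2+\sum_{e\neq f,\ e\cap f\neq\emptyset}y_ey_f\Bigr).
\]
This is order $x^{-4}$ and higher. For the $x^{-4}$ and $x^{-5}$ coefficients of this piece we only need the leading and next-to-leading terms of each product.

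Next I extract the coefficients term by term. For $-S$, the coefficient of $x^{-2-k}$ is $-\sum_{uv\in E}\sum_{a+b=k}d_u^ad_v^b$. At $k=2$ this equals $-(A_3+B)$, using $\sum_{uv}(d_u^2+d_v^2)=A_3$. At $k=3$ it equals $-(A_4+C)$, using $\sum_{uv}(d_u^3+d_v^3)=A_4$.

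For the correction term, the $x^{-4}$ coefficient is $-\tfrac12\bigl(|E|+\#\{\text{ordered adjacent edge pairs}\}\bigr)$. The number of ordered adjacent edge pairs is $\sum_v d_v(d_v-1)=A_2-2|E|$. For the $x^{-5}$ coefficient, I sum the first-order degree contributions. For $y_e^2$ this is $2(d_u+d_v)$. For an adjacent pair $uw,vw$ sharing $w$, it is $2d_w+d_u+d_v$. These sum to expressions in $A_3$, $A_2$ and $B$; for example $\sum_w\sum_{u\neq v\in N(w)}(d_u+d_v)=2\sum_w (d_w-1)\sum_{u\in N(w)}d_u=2(2B+\ldots)$-type identities. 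Multiplying by $-4$ and $-5$ respectively, adding $A_4$ and $A_5$, and collecting terms should give the stated formulas. As a check, the $x^{-4}$ computation gives $p_4=A_4+4(A_3+B)+2(|E|+A_2-2|E|)=A_4+4A_3+2A_2+4B-2|E|$, in agreement with the proposition.

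The main obstacle is bookkeeping in the $x^{-5}$ coefficient of the $S^2$ correction. One must correctly count ordered adjacent pairs, and convert sums such as $\sum_w d_w\cdot d_w(d_w-1)$ and $\sum_w(d_w-1)\sum_{u\sim w}d_u=\sum_{uw\in E}(d_u d_w+d_ud_w)-\sum_w\sum_{u\sim w}d_u=2B-A_2$ into the symbols $A_r$, $B$, $C$. I would do this carefully and then sanity-check the final formula for $p_5$ on $K_2$ and $P_3$. For $K_2$, $\LM=x^2-2x$, so $p_5=32$. For $P_3$, $\LM=(x-1)^2(x-2)-2(x-1)=(x-1)(x^2-3x)$, with roots $0,1,3$, so $p_5=244$.
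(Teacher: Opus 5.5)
Your proof is correct, and it takes a genuinely different route from the paper.

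The paper proceeds in three steps:
\begin{itemize}
\item it writes out the coefficients $b_1,\dots,b_5$ of $\LM_G$ in terms of the elementary symmetric functions $E_j$ of the degree sequence, together with $m$, $A_2,A_3,A_4,B,C$ and $\phi_2(G)$;
\item it substitutes these into Newton's identities for the roots;
\item it applies Newton's identities a second time, to the degree sequence, to recognize the pure $E_j$-part as $A_4$ (resp.\ $A_5$).
\end{itemize}
In that computation the nonlocal terms ($m^2$, $mE_j$, $E_1\phi_2(G)$) cancel only at the end of the substitution, after using $\phi_2(G)=(m^2+m-A_2)/2$.

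Your factorization $\LM_G(x)=\prod_v(x-d_G(v))\,(1+F)$, followed by taking logarithms, isolates $A_r$ immediately. The cancellation of disjoint ordered edge pairs in $S^2$ against the $2$-matching sum then does automatically what the paper achieves through its formula for $\phi_2(G)$: only single edges and ordered pairs of adjacent edges survive.

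What each approach buys:
\begin{itemize}
\item Yours explains structurally why the final formulas involve only local degree statistics. It would extend to $p_6,p_7,\dots$ as an expansion over connected edge configurations, without ever touching $E_j$ or global matching counts.
\item The paper's route is fully mechanical and elementary, but it hides the cancellation.
\end{itemize}

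The $x^{-5}$ bookkeeping you left as ``should give'' does close. Put $Q:=\sum_e y_e^2+\sum_{(e,f)}y_ey_f$, the second sum over ordered adjacent pairs. Its next-to-leading coefficients contribute:
\begin{itemize}
\item $2A_2$ from the terms $y_e^2$;
\item $2\sum_w d_w\cdot d_w(d_w-1)=2(A_3-A_2)$ from the shared vertex $w$;
\item $2\sum_w(d_w-1)\sum_{u\sim w}d_u=2(2B-A_2)$ from the two outer vertices.
\end{itemize}
Hence $[x^{-5}]Q=2A_3+4B-2A_2$, and
$p_5=A_5+5(A_4+C)+\tfrac52(2A_3+4B-2A_2)$, which is exactly the stated formula. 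This also agrees with your checks $p_5(K_2)=32$ and $p_5(P_3)=244$.
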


\begin{proof}
Write $\LM_G(x)=x^n-b_1x^{n-1}+b_2x^{n-2}-b_3x^{n-3}+b_4x^{n-4}-b_5x^{n-5}+\cdots +(-1)^nb_n$.

Denote by $E_r$ the $r$-th elementary symmetric polynomial in the degree sequence \\
$\{d(v):v\in V(G)\}$. By expanding \eqref{eq:LMdef}, one sees that only matchings of 
size at most $2$ contribute to $b_j$ for $1\le j\le 5$. A direct coefficient computation yields
\begin{align*}
b_1 &= E_1,\\
b_2 &= E_2-|E(G)|,\\ 
b_3 &= E_3-\sum_{xy\in E(G)} \bigl(E_1-d(x)-d(y)\bigr),\\
b_4 & = E_4-\sum_{xy\in E(G)}\bigl(E_2-E_1(d(x)+d(y))
+d(x)^2+d(y)^2+d(x)d(y)\bigr) + \phi_2(G), \\
b_5 & = E_5-\sum_{xy\in E(G)} \!\bigl(E_3-E_2(d(x)+d(y))
+E_1\bigl(d(x)^2+d(x)d(y)+d(y)^2\bigr)\bigr) \\
& \quad+\sum_{xy\in E(G)}\! \bigl(d(x)^3+d(x)^2d(y)+d(x)d(y)^2+d(y)^3\bigr) 
+\sum_{M\in \calM_2(G)}\!\! \bigg(E_1-\sum_{z\in V(M)} d(z)\bigg),
\end{align*}
where $\calM_2(G)$ is the set of $2$-matchings of $G$.

Now, by the definition of $A_r$, we have
\[
A_2 = \sum_{xy\in E(G)}\! \bigl(d(x)+d(y)\bigr),~~~ 
A_3 = \sum_{xy\in E(G)}\! \bigl(d(x)^2+d(y)^2\bigr),~~~
A_4 = \sum_{xy\in E(G)}\! \bigl(d(x)^3+d(y)^3\bigr), 
\]
and
\begin{align*}
\sum_{M\in \calM_2(G)} \sum_{z\in V(M)} d(z)
& = \sum_{xy\in E(G)} \bigl(d(x) + d(y)\bigr) \bigl(|E(G)|-d(x)-d(y)+1\bigr) \\
& = (|E(G)|+1) A_2 - A_3 - 2B.
\end{align*}
Set $m:=|E(G)|$. Using the identities above, together with
\[
\phi_2(G)=\binom{m}{2}-\sum_{v\in V(G)}\binom{d(v)}{2}
=\frac{m^2+m-A_2}{2},
\]
the coefficient formulas become 
\begin{align*}
b_1 &= E_1,\\ 
b_2 &=E_2-m,\\ 
b_3 &=E_3-mE_1+A_2,\\
b_4 & = E_4-mE_2+E_1A_2-A_3-B+\phi_2(G), \\
b_5 & = E_5-mE_3+E_2A_2-E_1(A_3+B)+A_4+C \\
& \quad +E_1\phi_2(G)-(m+1)A_2+A_3+2B.
\end{align*}

Now let $\lambda_1,\dots,\lambda_n$ be the roots of $\LM_G(x)$. 
Newton's identities imply that
\begin{align*}
p_1 &= b_1,\\ 
p_2 &=b_1^2-2b_2,\\ 
p_3 &=b_1^3-3b_1b_2+3b_3,\\
p_4 & = b_1^4-4b_1^2b_2+2b_2^2+4b_1b_3-4b_4, \\
p_5 & = b_1^5-5b_1^3b_2+5b_1b_2^2+5b_1^2b_3-5b_2b_3-5b_1b_4+5b_5.
\end{align*}

Substituting the expressions for $b_1,b_2,b_3,b_4$ into the formula for $p_4$, we get that
\begin{align*}
p_4 & = E_1^4-4E_1^2(E_2-m)+2(E_2-m)^2+4E_1(E_3-mE_1+A_2) \\
& \quad -4\bigl(E_4-mE_2+E_1A_2-A_3-B+\phi_2(G)\bigr) \\
& = E_1^4-4E_1^2E_2+2E_2^2+4E_1E_3-4E_4+4A_3+4B+2m^2-4\phi_2(G).
\end{align*}
We now apply Newton's identities to the degree sequence
$d(v_1),\dots,d(v_n)$, whose elementary symmetric polynomials are precisely
$E_1,E_2,\dots, E_n$, and we get that 
\begin{equation*}
A_4=E_1^4-4E_1^2E_2+2E_2^2+4E_1E_3-4E_4
\end{equation*}
and 
\[
p_4 = A_4+4A_3+4B+2m^2-4\phi_2(G) 
= A_4+4A_3+2A_2+4B-2m.
\]

Similarly, substituting the expressions for $b_1,\dots,b_5$ into the formula for $p_5$ gives
\begin{align*}
p_5 & = E_1^5-5E_1^3(E_2-m)+5E_1(E_2-m)^2+5E_1^2(E_3-mE_1+A_2) \\
& \quad -5(E_2-m)(E_3-mE_1+A_2) 
-5E_1\bigl(E_4-mE_2+E_1A_2-A_3-B+\phi_2(G)\bigr) \\
& \quad +5\bigl(E_5-mE_3+E_2A_2-E_1(A_3+B)+A_4+C \\
& \quad +E_1\phi_2(G)-(m+1)A_2+A_3+2B\bigr) \\
& = E_1^5-5E_1^3E_2+5E_1E_2^2+5E_1^2E_3-5E_2E_3-5E_1E_4+5E_5 \\
& \quad +5A_4+5A_3-5A_2+10B+5C.
\end{align*}
Applying Newton's identities to the degree sequence again, we have
\[
A_5=E_1^5-5E_1^3E_2+5E_1E_2^2+5E_1^2E_3-5E_2E_3-5E_1E_4+5E_5.
\]
Therefore, $p_5=A_5+5A_4+5A_3-5A_2+10B+5C$.
This completes the proof.
\end{proof}

The next lemma is the key input from \cite{WCC}.

\begin{lemma}[{\cite[Theorem~3.2]{WCC}}]\label{lem:WCC}
Assume that two-place LMRIV occurs to a connected graph $G$ by adding a non-edge $uv$. If $a=d_G(u), b=d_G(v)$,
and the changed Laplacian matching roots of $G$ are $\lambda_1(G)$ and $\lambda_k(G)$, then
\begin{equation}\label{eq:sumprod}
\lambda_1(G)+\lambda_k(G)=a+b+1, \text{ and  }
\lambda_1(G)\lambda_k(G)=ab.
\end{equation}
\end{lemma}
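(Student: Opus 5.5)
The plan is to compare low-order power sums of the Laplacian matching roots of $G$ and $G'=G+uv$. By hypothesis the multiset of roots of $G'$ is obtained from that of $G$ by replacing $\lambda_1(G),\lambda_k(G)$ with $\lambda_1(G)+1,\lambda_k(G)+1$. Which indices change plays no role beyond this, and connectivity is not used. Write $s=\lambda_1(G)+\lambda_k(G)$ and $P=\lambda_1(G)\lambda_k(G)$. Then
\[
p_2(G')-p_2(G)=2s+2,\qquad p_3(G')-p_3(G)=3\bigl(\lambda_1^2+\lambda_k^2\bigr)+3s+2=3(s^2-2P)+3s+2 .
\]
So it suffices to compute $p_2$ and $p_3$ in terms of degree data, exactly as in the proof of Proposition~\ref{prop:powersums}, and compare.

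\textbf{Step 1: closed formulas for $p_2$ and $p_3$.} Use the coefficient formulas from the proof of Proposition~\ref{prop:powersums}:
\[
b_1=E_1,\qquad b_2=E_2-m,\qquad b_3=E_3-mE_1+A_2 .
\]
Combine these with Newton's identities $p_2=b_1^2-2b_2$ and $p_3=b_1^3-3b_1b_2+3b_3$. Applying Newton's identities to the degree sequence as well, the degree-symmetric parts collapse to $A_2$ and $A_3$, and the $m$-terms combine. This gives
\[
p_2(G)=A_2+2m,\qquad p_3(G)=A_3+3A_2 .
\]

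\textbf{Step 2: the differences under adding $uv$.} Adding $uv$ changes only two degrees, $a\mapsto a+1$ and $b\mapsto b+1$, and increases $m$ by $1$. Hence
\[
\Delta A_2=2a+2b+2,\qquad \Delta A_3=3a^2+3a+1+3b^2+3b+1,
\]
and therefore
\[
\Delta p_2=2a+2b+4,\qquad \Delta p_3=3a^2+3b^2+9a+9b+8 .
\]

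\textbf{Step 3: solve.} Equating the two expressions for $\Delta p_2$ gives $2s+2=2a+2b+4$, that is, $s=a+b+1$. Substituting this into the expression for $\Delta p_3$ gives
\[
3s^2+3s+2=3a^2+3b^2+6ab+9a+9b+8 .
\]
Comparing with the degree formula for $\Delta p_3$ leaves $6ab=6P$, so $P=ab$, which is \eqref{eq:sumprod}.

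The only step needing care is Step 1. There the $m$-terms in $b_2$ and $b_3$ must be checked to cancel correctly, in particular the term $-mE_1$ in $b_3$ against $3b_1b_2$. I would double-check the coefficient $b_3=E_3-mE_1+A_2$ by a direct count of matchings with at most one edge before relying on it. Everything else is bookkeeping.
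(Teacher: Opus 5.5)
Your proof is correct. The paper does not prove this lemma; it quotes it from \cite{WCC} without argument. So the only in-paper comparison is the method of Proposition~\ref{prop:powersums}, and your derivation is exactly its order-$2$ and order-$3$ analogue.

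The step you flagged checks out. With $b_1=E_1$, $b_2=E_2-m$, $b_3=E_3-mE_1+A_2$ (the coefficients listed in the proof of Proposition~\ref{prop:powersums}, confirmed by a direct count over matchings with at most one edge), the $\pm 3mE_1$ terms cancel. This gives $p_2=A_2+2m$ and $p_3=A_3+3A_2$. For example, $P_3$ has Laplacian matching roots $3,1,0$, and both sides give $10$ and $28$. Your values $\Delta p_2=2a+2b+4$ and $\Delta p_3=3a^2+3b^2+9a+9b+8$, and the comparison with $2s+2$ and $3(s^2-2P)+3s+2$, are also right.

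Compared with the citation, your route makes Theorem~\ref{thm:main} fully self-contained. The only root-level input becomes the comparison of $p_2,\dots,p_5$ for $G$ and $G+uv$. It also shows that \eqref{eq:sumprod} holds for any graph and any pair of moving roots. Connectivity therefore enters the main proof only through $a,b\ge 1$ in the final Cauchy--Schwarz step, and that condition cannot be dropped. Take $G=K_{1,b}\cup K_1$, with $u$ the isolated vertex and $v$ the centre. The roots change from $b+1$, $1$ (multiplicity $b-1$), $0$, $0$ to $b+2$, $1$ (multiplicity $b$), $0$. This is a genuine two-place variation, and \eqref{eq:sumprod} holds there with $a=0$.
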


\section{Proof of the conjecture}

We are ready to prove Theorem~\ref{thm:main}.

\begin{proof}[Proof of Theorem~\ref{thm:main}]
Assume, to the contrary, that two-place LMRIV occurs to a connected graph $G$ when the non-edge $uv$ is added. Set $G':=G+uv$, $a:=d_G(u)$, and $b:=d_G(v)$.
Since $G$ is connected and $uv\notin E(G)$, we have $a,b\ge 1$. Let the changed roots of $G$ be $\lambda_1:=\lambda_1(G)$, $\lambda_k:=\lambda_k(G)$, so that
$\lambda_1(G')=\lambda_1+1$, $\lambda_k(G')=\lambda_k+1$,
while all the other Laplacian matching roots remain unchanged.

For a vertex $w\in V(G)$, define
\[
S_w:=\sum_{x\in N_G(w)} d_G(x), \quad 
T_w:=\sum_{x\in N_G(w)} d_G(x)^2.
\]
We need the following two claims.

\begin{claim}\label{claim-1}
$S_u+S_v=2ab+a+b$.
\end{claim}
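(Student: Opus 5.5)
The plan is to compare $p_4(G')$ with $p_4(G)$ in two ways. Proposition~\ref{prop:powersums} expresses $p_4$ through degree data, and among the terms there only $B$ involves neighbours' degrees. On the other side, the two-place LMRIV hypothesis together with Lemma~\ref{lem:WCC} pins the change of $p_4$ down as a polynomial in $a,b$ alone. Equating the two expressions should isolate $S_u+S_v$.

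First I compute the spectral side. Only $\lambda_1$ and $\lambda_k$ change, each by $+1$. So
\[
p_4(G')-p_4(G)=\sum_{\lambda\in\{\lambda_1,\lambda_k\}}\bigl((\lambda+1)^4-\lambda^4\bigr)=4P_3+6P_2+4P_1+2,
\]
where $P_r:=\lambda_1^r+\lambda_k^r$. By \eqref{eq:sumprod} we have $P_1=s:=a+b+1$ and $\lambda_1\lambda_k=q:=ab$. Hence $P_2=s^2-2q$ and $P_3=s^3-3sq$, and the whole spectral side is an explicit polynomial in $a,b$.

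Next I compute the combinatorial side using Proposition~\ref{prop:powersums}. Adding $uv$ raises only the degrees of $u$ and $v$, each by one, so
\[
\Delta A_r=(a+1)^r-a^r+(b+1)^r-b^r \quad\text{and}\quad \Delta|E|=1 .
\]
For $B$, each old edge $xu$ changes its contribution from $d(x)a$ to $d(x)(a+1)$, and similarly at $v$. Since $u,v$ are non-adjacent in $G$, no old edge is counted twice, and the new edge contributes $(a+1)(b+1)$. Thus
\[
\Delta B=S_u+S_v+(a+1)(b+1).
\]
This gives
\[
\Delta p_4=\Delta A_4+4\Delta A_3+2\Delta A_2+4\bigl(S_u+S_v+(a+1)(b+1)\bigr)-2 .
\]

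Equating the two expressions for $\Delta p_4$ and solving for $S_u+S_v$ should yield $2ab+a+b$. The main obstacle is purely the algebra: expanding $4(s^3-3sq)+6(s^2-2q)+4s+2$ and checking that everything except $4(2ab+a+b)$ cancels against the degree terms. As a sanity check before the $p_4$ computation, I would verify with $p_3$. The same Newton-identity computation gives $p_3=A_3+3A_2$, and its change should match $3P_2+3P_1+2$ identically in $a,b$. This confirms that the bookkeeping, including $\Delta|E|=1$ and the non-adjacency of $u,v$, is consistent.
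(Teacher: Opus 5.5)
Your proposal is correct and is essentially the paper's own proof. You compute $\Delta p_4$ from Proposition~\ref{prop:powersums} with $\Delta B=S_u+S_v+(a+1)(b+1)$, equate it to $4P_3+6P_2+4P_1+2$ evaluated via Lemma~\ref{lem:WCC}, and the algebra you defer does close, leaving exactly $4(S_u+S_v)=8ab+4a+4b$.
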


\begin{proof}
For convenience, for $r\geq 1$, write 
\[
\Delta A_r :=A_r(G')-A_r(G),\qquad 
\Delta B :=B(G')-B(G), \qquad
\Delta C :=C(G')-C(G).
\]
Since only the degrees of $u$ and $v$ change when the edge $uv$ is added, we see
\begin{align}\label{eq:DeltaAr}
\Delta A_r & =(a+1)^r-a^r+(b+1)^r-b^r \quad (r\ge 1) \\
\Delta B & = S_u+S_v+(a+1)(b+1). \label{eq:DeltaB}
\end{align}
Using Proposition~\ref{prop:powersums}, we obtain
\[
p_4(G')-p_4(G)=\Delta A_4+4\Delta A_3+2\Delta A_2+4\Delta B-2.
\]
Substituting \eqref{eq:DeltaAr} and \eqref{eq:DeltaB} into the above equation, and then expanding, gives
\begin{align}
p_4(G')-p_4(G)
& = 4(S_u+S_v)+4a^3+18a^2+4ab+24a \notag \\
&\quad +4b^3+18b^2+24b+16.
\label{eq:delta-p4-comb-expanded}
\end{align}

On the other hand, only two roots change, so
\begin{equation}\label{eq:delta-p4-roots}
p_4(G')-p_4(G)
=(\lambda_1+1)^4-\lambda_1^4+(\lambda_k+1)^4-\lambda_k^4
=4(\lambda_1^3+\lambda_k^3)+6(\lambda_1^2+\lambda_k^2)+4(\lambda_1+\lambda_k)+2.
\end{equation}
By Lemma~\ref{lem:WCC}, we deduce that
\begin{align}\label{eq:sumprod-again}
\lambda_1+\lambda_k & = a+b+1, \qquad \lambda_1\lambda_k=ab, \\
\lambda_1^2+\lambda_k^2
& = (\lambda_1+\lambda_k)^2-2\lambda_1\lambda_k = a^2+b^2+2a+2b+1, \label{eq:squaresum} \\
\lambda_1^3+\lambda_k^3
& = (\lambda_1+\lambda_k)^3-3\lambda_1\lambda_k(\lambda_1+\lambda_k) \label{eq:cubesum} \\ 
& = a^3+b^3+3a^2+3b^2+3ab+3a+3b+1. \nonumber
\end{align}
Substituting \eqref{eq:sumprod-again}, \eqref{eq:squaresum}, and \eqref{eq:cubesum} into \eqref{eq:delta-p4-roots}, we get
\[
p_4(G')-p_4(G) = 4a^3+18a^2+12ab+28a + 4b^3+18b^2+28b+16.
\]
Comparing with \eqref{eq:delta-p4-comb-expanded}, we obtain $S_u+S_v=2ab+a+b$.
\end{proof}

\begin{claim}\label{claim-2}
$(2a {+} 3)S_u {+} (2b {+} 3)S_v {+} T_u {+} T_v = 3a^2b {+} 3ab^2 {+} 8ab {+} 2a^2 {+} 2b^2 {+} 4a {+} 4b$.
\end{claim}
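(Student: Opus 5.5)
The plan mirrors Claim~\ref{claim-1}, this time using the $p_5$ identity of Proposition~\ref{prop:powersums}. We compute $p_5(G')-p_5(G)$ in two ways. The combinatorial side is
\[
\Delta A_5+5\Delta A_4+5\Delta A_3-5\Delta A_2+5\Delta C+10\Delta B .
\]
The spectral side is $(\lambda_1+1)^5-\lambda_1^5+(\lambda_k+1)^5-\lambda_k^5$. Equating the two sides should isolate a linear combination of $S_u,S_v,T_u,T_v$.

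The first step is to compute $\Delta C$. In $C(G)$, an edge $xy$ contributes $d(x)d(y)\bigl(d(x)+d(y)\bigr)$. Adding $uv$ changes the terms of three kinds of edges:
\begin{itemize}
\item edges $ux$ with $x\in N_G(u)$;
\item edges $vy$ with $y\in N_G(v)$;
\item the new edge $uv$ itself.
\end{itemize}
Since $u$ and $v$ are not adjacent in $G$, no edge lies in both of the first two groups, so there is no double counting. For an edge $ux$, the contribution changes from $a\,d(x)^2+a^2d(x)$ to $(a+1)d(x)^2+(a+1)^2d(x)$, a change of $d(x)^2+(2a+1)d(x)$. Summing over $N_G(u)$ and doing the same at $v$ gives
\[
\Delta C=T_u+(2a+1)S_u+T_v+(2b+1)S_v+(a+1)(b+1)(a+b+2).
\]
Together with \eqref{eq:DeltaAr} and \eqref{eq:DeltaB}, the combinatorial side equals
\[
5(T_u+T_v)+5(2a+1)S_u+5(2b+1)S_v+10(S_u+S_v)+\text{(polynomial in $a,b$)}.
\]
The $S$-coefficients combine to $5(2a+3)S_u+5(2b+3)S_v$, which matches the left side of Claim~\ref{claim-2} after dividing by $5$.

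For the spectral side, expand
\[
(\lambda+1)^5-\lambda^5=5\lambda^4+10\lambda^3+10\lambda^2+5\lambda+1,
\]
so we need the power sums of $\lambda_1,\lambda_k$ up to order four. Lemma~\ref{lem:WCC} gives $s=\lambda_1+\lambda_k=a+b+1$ and $q=\lambda_1\lambda_k=ab$, and \eqref{eq:squaresum} and \eqref{eq:cubesum} already supply orders two and three. For order four we use
\[
\lambda_1^4+\lambda_k^4=s^4-4s^2q+2q^2 .
\]
This turns the spectral side into an explicit polynomial in $a$ and $b$.

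The final step is to subtract the polynomial part of the combinatorial side and divide by $5$, which should produce the right-hand side $3a^2b+3ab^2+8ab+2a^2+2b^2+4a+4b$. We do not substitute Claim~\ref{claim-1} here, since the statement keeps $S_u$ and $S_v$ with separate coefficients. The main obstacle is purely computational bookkeeping, namely expanding $\Delta A_r$ for $r=2,\dots,5$, the product $(a+1)(b+1)(a+b+2)$, and $s^4-4s^2q+2q^2$ without error. As a check, every term of the spectral side must be symmetric in $a$ and $b$, and the pure-power terms $a^4$ and $b^4$ must cancel: each side carries $5a^4$ and $5b^4$. The $a^3$ terms must cancel as well.
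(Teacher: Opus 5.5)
Your proposal is correct and follows the paper's proof essentially step for step. You use the same three-class computation of $\Delta C$, and you compare the $p_5$ identity with $(\lambda_1+1)^5-\lambda_1^5+(\lambda_k+1)^5-\lambda_k^5$ via $\lambda_1^4+\lambda_k^4=s^4-4s^2q+2q^2$, which is equivalent to the paper's $(\lambda_1^2+\lambda_k^2)^2-2(\lambda_1\lambda_k)^2$. Carrying out your bookkeeping, the two polynomial parts differ by $15a^2b+15ab^2+10a^2+10b^2+40ab+20a+20b$, which after dividing by $5$ gives exactly the claimed right-hand side; your $a^4$ and $a^3$ cancellation checks also hold.
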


\begin{proof}
For $\Delta C$, note that for each edge $ux$ with $x\in N_G(u)$, the quantity
$d_G(u)^2d_G(x)+d_G(u)d_G(x)^2$ increases by
\[
(a+1)^2d_G(x)+(a+1)d_G(x)^2-a^2d_G(x)-ad_G(x)^2
=(2a+1)d_G(x)+d_G(x)^2.
\]
Summing over all $x\in N_G(u)$ gives the contribution $(2a+1)S_u+T_u$.
Similarly, the edges incident with $v$ contribute $(2b+1)S_v+T_v$.
Finally, the new edge $uv$ contributes
\[
(a+1)^2(b+1)+(a+1)(b+1)^2=(a+1)(b+1)(a+b+2).
\]
Hence,
\begin{equation}\label{eq:DeltaC}
\Delta C=(2a+1)S_u+T_u+(2b+1)S_v+T_v+(a+1)(b+1)(a+b+2).
\end{equation}

Using Proposition~\ref{prop:powersums} again, we have
\[
p_5(G')-p_5(G)=\Delta A_5+5\Delta A_4+5\Delta A_3-5\Delta A_2+5\Delta C+10\Delta B.
\]
Substituting \eqref{eq:DeltaAr}, \eqref{eq:DeltaB}, and \eqref{eq:DeltaC} 
into the above equation and simplifying, we obtain
\begin{equation}\label{eq:delta-p5-comb-expanded}
\begin{split}
p_5(G')-p_5(G)
& = 10aS_u+15S_u+10bS_v+15S_v+5T_u+5T_v \\
& \quad+5a^4+30a^3+5a^2b+60a^2+5ab^2+30ab+55a \\
& \quad+5b^4+30b^3+60b^2+55b+32.
\end{split}
\end{equation}

Since only $\lambda_1$ and $\lambda_k$ change,
\begin{align*}
p_5(G')-p_5(G)
& =(\lambda_1+1)^5-\lambda_1^5+(\lambda_k+1)^5-\lambda_k^5\\
& =5(\lambda_1^4+\lambda_k^4)+10(\lambda_1^3+\lambda_k^3)
+10(\lambda_1^2+\lambda_k^2)+5(\lambda_1+\lambda_k)+2.
\end{align*}
Using \eqref{eq:sumprod-again}, \eqref{eq:squaresum}, and \eqref{eq:cubesum} once more, together with
\begin{align*}
\lambda_1^4+\lambda_k^4
& =\bigl(\lambda_1^2+\lambda_k^2\bigr)^2-2(\lambda_1\lambda_k)^2 \\
& =a^4+b^4+4a^3+4b^3+4a^2b+4ab^2+6a^2+6b^2+8ab+4a+4b+1,
\end{align*}
we obtain
\begin{align}
p_5(G')-p_5(G)
& = 5a^4+30a^3+20a^2b+70a^2+20ab^2+70ab+75a \notag\\
& \quad+5b^4+30b^3+70b^2+75b+32.
\label{eq:delta-p5-roots-expanded}
\end{align}
Comparing \eqref{eq:delta-p5-comb-expanded} and \eqref{eq:delta-p5-roots-expanded}, we get the desired result.
\end{proof}

Finally, we will get a contradiction via Claim \ref{claim-1} and Claim \ref{claim-2}. To this end, by Cauchy's inequality, we have
$T_u\geq S_u^2/a$ and $T_v\geq S_v^2/b$.
Set for short, 
\[
R:=3a^2b+3ab^2+8ab+2a^2+2b^2+4a+4b.
\]
By Claim \ref{claim-1}, we may write $S_u=x$, $S_v=2ab+a+b-x$
for some real number $x$. Then Claim \ref{claim-2} implies
\begin{align*}
R
&\ge \frac{x^2}{a}+\frac{(2ab+a+b-x)^2}{b}+(2a+3)x+(2b+3)(2ab+a+b-x)\\
&=R+2ab+\frac{a+b}{ab}\bigl(x-a(b+1)\bigr)^2.
\end{align*}
Since $a,b\ge 1$, the right-hand side is strictly larger than $R$, a contradiction.
This contradiction shows that two-place LMRIV cannot occur. The proof is complete.
\end{proof}

Let $F\subseteq E(G^c)$ be nonempty, where $G^c$ is the complement of $G$. Set $H=G+F$. 
We say that \emph{multi-edge Laplacian matching root integral variation} for $G$ occurs when there exist integers
\[
t_1,\dots,t_n\in \mathbb Z_{\ge 0},\qquad t_1+\cdots+t_n=2|F|,
\]
such that the multiset of Laplacian matching roots of $H$ is obtained from that of $G$ by adding these integers, that is,
\[
\{\lambda_1(H),\dots,\lambda_n(H)\}
=
\{\lambda_1(G)+t_1,\dots,\lambda_n(G)+t_n\}
\]
as multisets.

When $|F|=1$, the only possible integral patterns are $(2,0)$ and $(1,1)$, corresponding 
to the classical one-place and two-place LMRIV considered in the paper.

We finish the paper with the conjecture below which has been verified by computer for $n \leq 9$.
\begin{conjecture}
Let $G$ be a connected graph. Then multi-edge Laplacian matching root integral variation does not occur for $G$.
\end{conjecture}

\end{document}